\documentclass[10pt]{amsart}

\usepackage{enumerate}
\usepackage{amsmath,amssymb}

\newtheorem{theorem}{Theorem}
\newtheorem{lemma}[theorem]{Lemma}

\newtheorem{problem}[theorem]{Problem}

\begin{document}
\title[Spectrally additive maps on  the Wiener algebras]{Spectrally additive maps on the positive cones of  the Wiener algebra}

\author[S.Oi]{Shiho Oi}
\address{Department of Mathematics, Faculty of Science, 
Niigata University, Niigata 950-2181, Japan.}
\email{shiho-oi@math.sc.niigata-u.ac.jp}

\author[K.Sato]{Kaito Sato}
\address{Graduate School of Science and Technology, 
Niigata University, Niigata 950-2181, Japan.}
\email{f24a059h@mail.cc.niigata-u.ac.jp}

\thanks{The first author was supported in part by JSPS KAKENHI Grant Number JP24K06754.
}
\subjclass[2020]{Primary  46J10, 46B40.} 

\keywords{spectrum, positive cone, Wiener algebra.}

\date{}

\begin{abstract}
We study surjective maps between the positive cones of the Wiener algebra that preserve the spectrum of the sum of every two elements. We show that such maps can be extended to isometric real-linear isomorphisms of the Wiener algebra.  
\end{abstract}

\maketitle

\section{Introduction}
Let $\mathbb{T}=\{z \in \mathbb{C} \mid |z|=1\}$. The Wiener algebra  $A(\mathbb{T})$ consists of  all continuous functions on $\mathbb{T}$ whose Fourier series converge absolutely.  That is, 
\[
A(\mathbb{T})=\{ f \in C(\mathbb{T}) \mid \Sigma_{n \in \mathbb{Z}} |\widehat{f}(n)| < \infty \},
\]
where $\widehat{f}(n)=\frac{1}{2\pi}\int_{0}^{2\pi} f(e^{i \theta}) e^{-in \theta} d\theta$ is the $n$th Fourier coefficient of $f$. The norm on $A(\mathbb{T})$ is defined by $\|f\|=\Sigma_{n \in \mathbb{Z}} |\widehat{f}(n)| $ for any $f \in A(\mathbb{T})$. The Wiener algebra is a semi-simple commutative unital Banach algebra and is isometrically algebra isomorphic to the Banach algebra $\ell_1(\mathbb{Z})$ with the isomorphism given by the Fourier transform $f \mapsto \{ \widehat{f}(n)\}_{n \in \mathbb{Z}}$.  

The Fourier algebra $A(G)$ of a (not necessarily abelian) locally compact group $G$, which was introduced by Eymard \cite{E}, is also a semi-simple commutative Banach algebra. The Gelfand spectrum of $A(G)$ is homeomorphic to $G$ (see \cite[Th\'eor\`eme 3.34]{E}). It implies that the spectrum $\sigma(f)$ of any element $f \in A(G)$ coincides with its range. The positive cone $A(G)_{+}$ of the Fourier algebra $A(G)$ is given by $P(G) \cap A(G)$, where $P(G)$ denotes the cone of all continuous positive definite functions on $G$. The Wiener algebra $A(\mathbb{T})$ coincides with the Fourier algebra of the group $\mathbb{T}$. According to Bochner's theorem, we have 
\[
A(\mathbb{T})_{+}=\{ f  \in A(\mathbb{T}) \mid \widehat{f}(n) \ge 0,  \quad \forall n \in \mathbb{Z} \}. 
\]

For a Banach algebra $A$ and $a \in A$, we denote the spectrum of $a$ by $\sigma(a)$. Surjective maps between Banach algebras which preserve spectral properties have been studied extensively in connection with a longstanding open problem called Kaplansky's problem. One of breakthroughs was obtained by Kowalski and S\l odkowski \cite{KS}. They proved that every complex-valued mapping $\phi$ on a complex Banach algebra $A$, without assuming linearity, that satisfies $\phi(a)-\phi(b) \in \sigma(a-b)$ for all $a,b$ in $A$  and $\phi(0)=0$, is a character. Motivated by the theorem,  Havlicek and \v{S}emrl in \cite{HS} investigated bijective maps $T$ on matrix algebras and operator algebras satisfying the condition that $T(a)-T(b)$ is invertible if and only if $a-b$ is invertible. Subsequently, the study of maps on Banach algebras that preserve the spectrum of sums or differences of elements was initiated and further developed (e.g., \cite{ABS, BS, C, LO, MS}).  

In \cite{LO}, Lin and the first author of this paper posed the following question.  
\begin{problem}[{\cite[Problem 1.3]{LO}}]\label{prob}
Let $G$ and $H$ be locally compact groups. 
Let $T: A(G)_{+} \to A(H)_{+}$ be a surjective map such that 
\begin{equation} \label{as}
    \sigma(T(f)+T(g))=\sigma(f+g), \quad f, g \in A(G)_{+}.
\end{equation}
Can $T$ be extended to a positive real-linear isometry from $A(G)$ onto $A(H)$? In addition, are $G$ and $H$ topologically group isomorphic?
\end{problem}

We should first note that a surjective map $T: A(G) \to A(H)$ satisfies
\begin{equation} \label{as0}
    \sigma(T(f)+T(g))=\sigma(f+g), \quad f, g \in A(G),
\end{equation}
if and only if   $T$ is an algebra isomorphism from $A(G)$ onto $A(H)$. 
This statement follows easily from the Kowalski-S\l odkowski theorem, but for completeness we give a short proof here. Let $x \in H$. We define $\phi: A(G) \to \mathbb{C}$ by $\phi(f)=T(f)(x)$. Then $\phi(f)+\phi(-f)=T(f)(x)+T(-f)(x) \in \sigma(T(f)+T(-f))=\sigma(f+(-f))=\{0\}$. Thus $\phi(-f)=-\phi(f)$ for any $f \in A(G)$. We have $\phi(f)-\phi(g)=\phi(f)+\phi(-g)=T(f)(x)+T(-g)(x) \in \sigma(T(f)+T(-g))=\sigma(f-g)$. As $\phi(0)=0$, applying Kowalski-S\l odkowski theorem, we obtain $\phi$ is a character i.e., $\phi$ is in the Gelfand spectrum of $A(G)$. Thus there is $\varphi(x) \in G$ such that $T(f)(x)=\phi(f)=f(\varphi(x))$. The usual routine arguments give that $T$ is an algebra isomorphism. Moreover, the converse is clear. Since every algebra isomorphism is a composition operator $T(f)=f \circ \varphi$, it satisfies \eqref{as0}. Here $\varphi$ is a homeomorphism from $H$ onto $G$, but it is not necessarily a group isomorphism. Thus a surjective map  between Fourier algebras satisfying \eqref{as0} does not necessarily induce a group isomorphism between the underlying groups. 

On the other hand, Problem \ref{prob} is actually true for the Fourier algebra $A(G)$ of any finite group \cite[Theorem 2.5]{LO}. When $G$ and $H$ are finite groups, every surjective map between the positive cones $A(G)_{+}$ and  $A(H)_{+}$ which satisfies \eqref{as} induces a group isomorphism between $G$ and $H$. 

The aim of this paper is to show that Problem \ref{prob} is also true for the Wiener algebra.  More precisely, the main result of this paper is the following.
\begin{theorem}\label{main}
Let $T:A(\mathbb{T})_{+} \to A(\mathbb{T})_{+}$ be a surjective map which satisfies 
\begin{equation}\label{spectrumadditive}
\sigma(Tf+Tg)=\sigma(f+g) \quad  f, g \in A(\mathbb{T})_{+}.
\end{equation}
Then $T(f)=f $ for all $f \in A(\mathbb{T})_{+} $ or $T(f)=\overline{f}$ for all $f \in A(\mathbb{T})_{+} $  holds.
\end{theorem}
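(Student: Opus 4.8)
\emph{Strategy of proof.} We use throughout that $A(\mathbb{T})$ is the Fourier algebra of $\mathbb{T}$, so $\sigma(h)=h(\mathbb{T})$ is the range of $h$, and that $\widehat h(n)\ge 0$ for $h\in A(\mathbb{T})_{+}$ forces $\|h\|_{\infty}=h(1)=\|h\|$. First I would collect the elementary consequences of \eqref{spectrumadditive}: putting $f=g=0$ gives $T0=0$; putting $g=0$ gives $\sigma(Tf)=\sigma(f)$; comparing the point of largest modulus in $\sigma(Tf)=\sigma(f)$ gives $Tf(1)=f(1)$; and $\sigma(Tc)=\{c\}$ gives $Tc\equiv c$ for every constant $c\ge 0$. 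Next I would note that a unimodular $h\in A(\mathbb{T})_{+}$ is necessarily a monomial: from $1=h(1)=\sum_{n}\widehat h(n)$ and $1=\frac1{2\pi}\int_{\mathbb T}|h|^{2}=\sum_{n}\widehat h(n)^{2}$ the sequence $(\widehat h(n))_{n}$ is a probability vector of $\ell^{2}$-norm one, hence a point mass. As $T$ is surjective and $\sigma(Tz^{n})=\mathbb{T}$, this yields $Tz^{n}=z^{\tau(n)}$ for a \emph{surjective} $\tau\colon\mathbb{Z}\to\mathbb{Z}$; and from $\sigma(z^{\tau(n)}+z^{\tau(-n)})=\sigma(z^{n}+z^{-n})=[-2,2]$ together with the fact that $\sigma(z^{p}+z^{q})\subseteq\mathbb R$ forces $q=-p$ (and that the spectrum is a single point, not $[-2,2]$, when the exponent is $0$), the map $\tau$ is odd and non-vanishing off $0$. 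Finally, for $s>0$ the element $s^{-1}T(sz^{n})$ is unimodular in $A(\mathbb{T})_{+}$, hence $=z^{b}$; plugging into $\sigma(T(sz^{n})+Tz^{n})=\sigma((s+1)z^{n})=(s+1)\mathbb T$ and using equality in the triangle inequality gives $b=\tau(n)$, so $T(sz^{n})=sz^{\tau(n)}$ for all $s\ge 0$, $n\in\mathbb{Z}$.

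The main point is then to pass from the monomials to all of $A(\mathbb{T})_{+}$, and the tool I would use is a first‑order (Danskin‑type) analysis of support functions. For compact $K\subseteq\mathbb C$ put $S_{K}(\psi)=\max\{\operatorname{Re}(e^{-i\psi}w):w\in K\}$; equal sets have equal support functions, hence equal one‑sided $\eta$‑derivatives for any parametrised family. For $K=\sigma(z^{k}+\eta u)$ with $u\in A(\mathbb T)_{+}$ and $\eta>0$ one has $S_{K}(\psi)=\max_{\theta}\bigl[\cos(k\theta-\psi)+\eta\operatorname{Re}(e^{-i\psi}u(e^{i\theta}))\bigr]$; when $|k|=1$ the maximiser at $\eta=0$ is unique, so Danskin's theorem gives $\partial_{\eta}^{+}S_{K}(\psi)|_{\eta=0}=\operatorname{Re}(e^{-i\psi}u(e^{i\theta^{\ast}}))$ with $k\theta^{\ast}\equiv\psi$, whereas for $|k|\ge 2$ the right derivative is a maximum over the $|k|$ maximisers. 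Applying this to the identity $\sigma(z^{\tau(m)}+\eta z^{\tau(n)})=\sigma(z^{m}+\eta z^{n})$ (valid for all $\eta>0$ by the previous paragraph) with $m=m_{0}$ chosen so that $\tau(m_{0})=1$: the left side has $k=1$, so its right $\eta$‑derivative is the single cosine $\cos\bigl((\tau(n)-1)\psi\bigr)$; if $|m_{0}|\ge 2$ then choosing $n$ not divisible by $m_{0}$ makes the right side's derivative a maximum of at least two distinct translates of a non‑constant cosine, a function with corners, hence not a single cosine — a contradiction. Thus $|m_{0}|=1$, and matching the two derivatives together with the oddness of $\tau$ forces $\tau=\mathrm{id}$ or $\tau(n)\equiv -n$.

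To conclude, I would observe that complex conjugation $C\colon h\mapsto\overline h$ maps $A(\mathbb{T})_{+}$ onto itself and satisfies \eqref{spectrumadditive} (since $h(\bar z)=\overline{h(z)}$ on $A(\mathbb{T})_{+}$ gives $\overline{\sigma(f+g)}=\sigma(f+g)$); replacing $T$ by $C\circ T$ in the second case, we may assume $\tau=\mathrm{id}$, so $T(sz^{n})=sz^{n}$ for all $s\ge 0$, $n\in\mathbb{Z}$. Then for every $f\in A(\mathbb{T})_{+}$ and $\eta>0$, $\sigma(Tf+\eta z^{\pm1})=\sigma(f+\eta z^{\pm1})$, i.e.\ after rescaling $\sigma(z^{\pm1}+\eta\,Tf)=\sigma(z^{\pm1}+\eta f)$; differentiating the support functions at $\eta=0$ as above yields $\operatorname{Re}\bigl(e^{-i\psi}(Tf-f)(e^{i\psi})\bigr)\equiv 0\equiv\operatorname{Re}\bigl(e^{i\psi}(Tf-f)(e^{i\psi})\bigr)$. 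Writing $c_{n}=\widehat{Tf}(n)-\widehat f(n)\in\mathbb R$ and comparing Fourier coefficients, these identities give $c_{n}+c_{2-n}=0$ and $c_{n}+c_{-2-n}=0$ for all $n$, hence $c_{n}=c_{n-4}$; a $4$‑periodic element of $\ell_{1}(\mathbb Z)$ vanishes, so $Tf=f$. The step I expect to be delicate is the support‑function differentiation itself — justifying the Danskin computation (only the $\eta=0$ maximiser of the unperturbed monomial enters, so the non‑smoothness of a general $f$ causes no trouble) and the elementary‑but‑fiddly claim that a maximum of several distinct translates of a non‑constant cosine is never a single cosine — together with bookkeeping the reductions; granting those, the theorem follows.
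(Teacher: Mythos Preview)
Your proposal is correct and reaches the theorem by a genuinely different route than the paper, especially in the crucial step of pinning down the permutation of exponents.

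\medskip
\textbf{Where the two proofs agree.} The preliminaries are essentially the same: both arguments show that $T$ preserves spectra, fixes constants, maps each monomial $z^{n}$ to some $z^{\tau(n)}$, and satisfies $T(\lambda z^{n})=\lambda z^{\tau(n)}$. (The paper deduces the ``unimodular $\Rightarrow$ monomial'' fact from the character theory of positive definite functions; your $\ell^{2}$-versus-$\ell^{1}$ argument is a nice elementary substitute.) The final step --- recovering $Tf=f$ from $\sigma(Tf+\lambda z^{\pm1})=\sigma(f+\lambda z^{\pm1})$ --- is also essentially the same first-order analysis in both proofs: the paper sends $\lambda\to\infty$ directly and reads off $\operatorname{Re}(e^{\mp it}(Tf-f)(e^{it}))=0$ pointwise, then observes that for generic $t$ these two real-linear conditions on the single complex number $(Tf-f)(e^{it})$ force it to vanish; your support-function derivative yields the same two identities, and your Fourier/$4$-periodicity argument is a correct (if slightly longer) alternative to the ``intersection of two lines'' observation.

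\medskip
\textbf{Where they differ.} The heart of the paper's proof (its Lemma on $\psi$) is a bespoke trigonometric/arithmetic argument: from $\sigma(z+z^{m})=\sigma(z^{\psi(1)}+z^{\psi(m)})$ one first matches the cardinalities of the intersection with $2\mathbb{T}$ to get $\psi(m)-\psi(1)=g(m-1)$, and then evaluates at an irrational angle to force $\psi(m)+\psi(1)=\pm g(m+1)$, whence $\psi(m)=\psi(1)m$ for large $|m|$ and finally $\psi(1)=\pm1$ by bijectivity. Your approach replaces all of this with a single Danskin/envelope argument: differentiating the support function of $\sigma(z^{k}+\eta z^{\ell})$ at $\eta=0$ gives a single cosine when $|k|=1$ but a max of several shifted cosines (hence a function with corners) when $|k|\ge2$; this immediately forces the preimage $m_{0}$ of $1$ under $\tau$ to satisfy $|m_{0}|=1$, after which matching the derivatives for $m=\pm1$ gives $|\tau(n)-1|=|n\mp1|$ and $|\tau(n)+1|=|n\pm1|$, which intersect only at $\tau(n)=\pm n$. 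Your route is more conceptual and unified (the same envelope idea handles both the determination of $\tau$ and the passage to general $f$), and it avoids the irrational-angle trick entirely; the paper's route is more elementary in that it needs no Danskin-type theorem and no regularity discussion of the upper envelope. The one place you should tighten is exactly the one you flag: the claim that a max of at least two distinct phase-shifts of a non-constant cosine cannot be $C^{1}$ deserves a line of justification (no single shifted cosine can dominate all others since each attains the value $1$ at a different point; hence the maximiser changes, and at a switching point the one-sided derivatives differ because equal cosines with equal derivatives would have equal phases).
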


\section{Proofs}
Let $G$ be a locally compact group and let $f$ be a positive definite function on $G$ such that  $f(e)=1$, where $e$ is the unit of $G$. Then $f$ is a character of the subgroup $\{x \in G \mid |f(x)|=1\}$ (see for example \cite[Corollary 32.7]{HR}).  Applying this and the fact that the dual group of $\mathbb{T}$ is $\mathbb{Z}$, we obtain the following.
\begin{lemma}\label{1}
Let $\lambda \in \mathbb{R}$ with $\lambda>0$.  Assume that $f \in A(\mathbb{T})_{+}$ with  $\sigma(f) \subset \lambda \mathbb{T}$. Then there is $n \in \mathbb{Z}$ such that $f(z)=\lambda z^{n}$. 
\end{lemma}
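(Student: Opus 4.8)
The plan is to reduce the statement to the cited fact about positive definite functions of modulus one, by first normalizing $f$ and then exploiting that its modulus turns out to be constant on all of $\mathbb{T}$.

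First I would use that $A(\mathbb{T})$ is the Fourier algebra of $\mathbb{T}$, whose Gelfand spectrum is $\mathbb{T}$ itself; hence the spectrum $\sigma(f)$ coincides with the range $f(\mathbb{T})$. The hypothesis $\sigma(f) \subset \lambda \mathbb{T}$ therefore says precisely that $|f(z)| = \lambda$ for every $z \in \mathbb{T}$. Set $g = \lambda^{-1} f$. Since $\widehat{g}(n) = \lambda^{-1} \widehat{f}(n) \ge 0$ for all $n \in \mathbb{Z}$, the description $A(\mathbb{T})_{+} = \{h \in A(\mathbb{T}) : \widehat{h}(n) \ge 0 \ \forall n\}$ gives $g \in A(\mathbb{T})_{+}$, i.e.\ $g$ is a continuous positive definite function on $\mathbb{T}$, and $|g(z)| = 1$ for all $z \in \mathbb{T}$.

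Next I would pin down the value of $g$ at the group identity $e = 1 \in \mathbb{T}$. Evaluating the absolutely convergent Fourier series at $z = 1$ gives $g(1) = \sum_{n \in \mathbb{Z}} \widehat{g}(n) \ge 0$; combined with $|g(1)| = 1$ this forces $g(e) = g(1) = 1$. Now the cited result \cite[Corollary 32.7]{HR} applies: a positive definite function $g$ on a locally compact group $G$ with $g(e) = 1$ restricts to a character on the subgroup $\{x \in G : |g(x)| = 1\}$. Here that subgroup is all of $\mathbb{T}$, so $g$ is a character of $\mathbb{T}$. Since the dual group of $\mathbb{T}$ is $\mathbb{Z}$, we conclude $g(z) = z^{n}$ for some $n \in \mathbb{Z}$, whence $f(z) = \lambda z^{n}$, as claimed.

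I do not expect a serious obstacle here; the argument is essentially a direct assembly of the facts already quoted in the excerpt. The one point that requires a moment's care is the passage from ``$|g|$ is the constant function $1$'' to ``$g(e) = 1$'': it is not enough to know $|g(e)| = 1$, one genuinely needs the equality $g(e) = 1$ in order to invoke \cite[Corollary 32.7]{HR}, and this is exactly where positive definiteness (nonnegativity of the Fourier coefficients) is used.
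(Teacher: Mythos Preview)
Your proof is correct and follows exactly the route the paper indicates: normalize to a positive definite function of unit modulus, check $g(e)=1$ via nonnegativity of the Fourier coefficients, and then invoke \cite[Corollary 32.7]{HR} together with $\widehat{\mathbb{T}}=\mathbb{Z}$. You have simply written out in full the details that the paper leaves implicit.
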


For $m,n \in \mathbb{Z}$, we define $M_{m,n}:=\{ z^{m}+z^{n} \mid z \in \mathbb{T} \} \cap 2\mathbb{T}$. 
\begin{lemma}\label{2}
Let $m,n \in \mathbb{Z}$ with $m \neq n$. Then 
\[
M_{m,n}=\{ 2(e^{i\frac{2m\pi}{m-n}})^k \mid 0 \le k \le |m-n|-1\},
\]
and 
\[
\#(M_{m,n})=\frac{|m-n|}{\operatorname{gcd}(m,n)},
\]
where $\#(M_{m,n})$ is the cardinal number of the set $M_{m,n}$ and $\operatorname{gcd}(m,n)$ is the greatest common divisor of $m$ and $n$.
\end{lemma}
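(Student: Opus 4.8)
The plan is to translate the membership condition $z^{m}+z^{n}\in 2\mathbb{T}$ into the single equation $z^{m-n}=1$, after which both displayed assertions become statements about the cyclic group of $(m-n)$-th roots of unity. Since $M_{m,n}=M_{n,m}$ and both displayed formulas are unchanged when $m$ and $n$ are interchanged (for the first, use $e^{2\pi i m/(m-n)}=e^{2\pi i n/(m-n)}$, these numbers differing by $e^{2\pi i}$, together with $\langle w\rangle=\langle w^{-1}\rangle$; for the second, use $\gcd(a,b)=\gcd(|a|,|b|)$), I may assume $m>n$ and put $d:=m-n\ge 1$.

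First I would observe that for $z\in\mathbb{T}$ one has $z^{m}+z^{n}=z^{n}(z^{m-n}+1)$, hence $|z^{m}+z^{n}|=|z^{d}+1|$; and since $|w+1|^{2}=2+2\operatorname{Re}w$ for $w\in\mathbb{T}$, the equality $|w+1|=2$ holds exactly when $w=1$. Thus $z^{m}+z^{n}\in 2\mathbb{T}$ if and only if $z^{d}=1$, in which case $z^{m}=z^{n}$ and $z^{m}+z^{n}=2z^{m}$; consequently $M_{m,n}=\{\,2z^{m}\mid z\in\mathbb{T},\ z^{d}=1\,\}$.

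Next, the $d$-th roots of unity form a cyclic group of order $d$ generated by $\omega:=e^{2\pi i/d}$, so as $z$ ranges over them $z^{m}$ ranges over the cyclic subgroup $\langle\omega^{m}\rangle=\langle e^{2\pi i m/d}\rangle$ of $\mathbb{T}$. Hence $M_{m,n}=2\langle e^{2\pi i m/d}\rangle=\{\,2(e^{2\pi i m/d})^{k}\mid 0\le k\le d-1\,\}$, which, recalling $d=m-n$, is precisely the first displayed set. Its cardinality is the order of $e^{2\pi i m/d}$, namely $d/\gcd(m,d)$, and $\gcd(m,d)=\gcd(m,m-n)=\gcd(m,n)$ because a common divisor of $m$ and $m-n$ divides $n$ and conversely; this gives $\#(M_{m,n})=|m-n|/\gcd(m,n)$. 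I do not expect a genuine obstacle here: the only points requiring care are the reduction to the case $m>n$ and the degenerate cases in which $m$ or $n$ equals $0$, all of which are absorbed by the conventions $\gcd(a,0)=|a|$ and $\gcd(a,b)=\gcd(|a|,|b|)$.
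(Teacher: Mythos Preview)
Your proof is correct and follows essentially the same route as the paper: reduce to $m>n$, rewrite $|z^{m}+z^{n}|=|z^{m-n}+1|$ to obtain the criterion $z^{m-n}=1$, deduce $M_{m,n}=\{2z^{m}:z^{m-n}=1\}$, and then compute the cardinality via $\gcd(m,m-n)=\gcd(m,n)$. Your symmetry justification for the WLOG step and your remark on the $\gcd(a,0)=|a|$ convention are more explicit than the paper's treatment, but the argument is otherwise the same.
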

\begin{proof}
We assume $m>n$ without loss of generality. For any $z \in \mathbb{T}$, we have $|z^m+z^n|=2 \Longleftrightarrow |z^{m-n}+1|=2 \Longleftrightarrow z^{m-n}=1$. This implies that $z^{m}+z^{n} \in M_{m,n}$ if and only if $z$ is an $(m-n)$-th root of unity. Moreover, in this case we have $z^m=z^n$. Thus, 
\begin{equation*}
\begin{split}
M_{m,n}&=\{ 2(e^{i\frac{2k\pi}{m-n}})^m \mid 0 \le k \le m-n-1 \}\\&=\{ 2(e^{i\frac{2m\pi}{m-n}})^k \mid 0 \le k \le m-n-1\}.
\end{split}
\end{equation*}
Since $\frac{m-n}{\operatorname{gcd}(m-n,m)}$ is coprime with $\frac{m}{\operatorname{gcd}(m-n,m)}$ and $0<\frac{m-n}{\operatorname{gcd}(m-n,m)} \le m-n$, we have $M_{m,n}$ is the set of all $\frac{m-n}{\operatorname{gcd}(m-n,m)}$-th roots of unity. As $\operatorname{gcd}(m-n,m)=\operatorname{gcd}(m,n)$, we get $\#(M_{m,n})=\frac{m-n}{\operatorname{gcd}(m,n)}$.
\end{proof}

We are now in a position to present the proof of Theorem \ref{main}. 
Let $T:A(\mathbb{T})_{+} \to A(\mathbb{T})_{+}$ be a surjective map which satisfies \eqref{spectrumadditive}. 
\begin{lemma}[{\cite[Lemma 2.4]{LO}}]\label{s0}
For any $f \in A(\mathbb{T})_{+}$,  we have $\sigma(Tf)=\sigma(f)$.
\end{lemma}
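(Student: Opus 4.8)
The plan is to obtain this identity directly from the hypothesis \eqref{spectrumadditive} by specializing it to the pair $(f,f)$ and then using the elementary behaviour of the spectrum under scalar multiplication. First I would note that $A(\mathbb{T})_{+}$, being a cone, is closed under addition, so that $Tf\in A(\mathbb{T})_{+}$ and $Tf+Tf=2\,Tf$ is a legitimate element of $A(\mathbb{T})$; similarly $f+f=2f$. Putting $g=f$ in \eqref{spectrumadditive} then gives
\[
\sigma(2\,Tf)=\sigma(Tf+Tf)=\sigma(f+f)=\sigma(2f).
\]

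Next I would invoke the standard fact that in a complex unital Banach algebra $\sigma(\lambda a)=\lambda\,\sigma(a)$ for every scalar $\lambda$ and every element $a$. Applying this with $\lambda=2$ to both sides of the displayed equality yields $2\,\sigma(Tf)=2\,\sigma(f)$, and since multiplication by $2$ is a bijection of $\mathbb{C}$ (equivalently, applying the same rule with $\lambda=\tfrac12$ to $2\,Tf$ and $2f$) we conclude $\sigma(Tf)=\sigma(f)$.

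I do not expect a real obstacle here: the only points to verify are that the cone structure makes $2\,Tf$ an element of the algebra and that the scalar-multiplication rule for spectra applies, both of which are routine, and surjectivity of $T$ is not even needed for this particular statement. The content of the lemma is simply the hypothesis read at equal arguments; it is the conjunction of this equality $\sigma(Tf)=\sigma(f)$ with Lemma \ref{1} (which identifies the elements whose spectrum lies on a circle $\lambda\mathbb{T}$) and the combinatorial data of Lemma \ref{2} that I would expect to carry the bulk of the work toward Theorem \ref{main}.
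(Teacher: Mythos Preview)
Your argument is correct: specializing \eqref{spectrumadditive} to $g=f$ and using $\sigma(\lambda a)=\lambda\,\sigma(a)$ gives the result immediately. The paper does not supply its own proof here but simply cites \cite[Lemma 2.4]{LO}; your one-line derivation is exactly the natural argument and almost certainly what lies behind that citation.
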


By Lemma \ref{s0}, for each $n \in \mathbb{Z}$, 
 $\sigma(T(z^n))=\sigma(z^n) \subset \mathbb{T}$. Lemma \ref{1} implies that there is $m \in \mathbb{Z}$ such that $T(z^n)=z^{m}$.  
We define a map $\psi:\mathbb{Z} \to \mathbb{Z}$ by
\[
T(z^{n})=z^{\psi(n)}.
\]

\begin{lemma}\label{3}
For any $\lambda>0$ and $n \in \mathbb{Z}$, we have $T(\lambda z^n)=\lambda z^{\psi(n)}$.
\end{lemma}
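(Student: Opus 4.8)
\emph{Plan.} The idea is to first pin down the shape of $T(\lambda z^n)$ from the single‑element spectral rigidity, and then to fix the exponent by feeding the pair $(\lambda z^n, z^n)$ into the additivity relation \eqref{spectrumadditive}. Since $\lambda z^n$ lies in $A(\mathbb{T})_+$ (its only nonzero Fourier coefficient is $\lambda>0$), Lemma \ref{s0} gives $\sigma(T(\lambda z^n))=\sigma(\lambda z^n)=\lambda\mathbb{T}$, and then Lemma \ref{1} produces an integer $k=k(\lambda,n)$ with $T(\lambda z^n)=\lambda z^{k}$. It remains only to show $k=\psi(n)$.

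For that, apply \eqref{spectrumadditive} with $f=\lambda z^n$ and $g=z^n$, both of which belong to $A(\mathbb{T})_+$. On the one hand $f+g=(\lambda+1)z^n$, whose spectrum, being its range, equals $(\lambda+1)\mathbb{T}$. On the other hand $T(f)+T(g)=\lambda z^{k}+z^{\psi(n)}$, so we must have $\sigma(\lambda z^{k}+z^{\psi(n)})=(\lambda+1)\mathbb{T}$.

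Finally, I would observe that this equality forces $k=\psi(n)$. Writing $m=\psi(n)$ and factoring $\lambda z^{k}+z^{m}=z^{m}(\lambda z^{k-m}+1)$, if $k\neq m$ then $z\mapsto z^{k-m}$ maps $\mathbb{T}$ onto $\mathbb{T}$, so choosing $z$ with $z^{k-m}=-1$ exhibits a point of the range of modulus $|1-\lambda|<\lambda+1$, contradicting $\sigma(\lambda z^{k}+z^{m})=(\lambda+1)\mathbb{T}$. Hence $k=m=\psi(n)$, which is the assertion.

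I do not expect a genuine obstacle here; the only points needing care are that $\lambda z^n$ and $z^n$ really lie in the positive cone (immediate from Bochner's description, as $\lambda,1>0$) and that, for elements of $A(\mathbb{T})$, the spectrum coincides with the range (recorded in the introduction, since the Gelfand spectrum of $A(\mathbb{T})$ is $\mathbb{T}$).
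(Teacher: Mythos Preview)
Your proof is correct and follows essentially the same route as the paper: both arguments use Lemma~\ref{s0} and Lemma~\ref{1} to write $T(\lambda z^n)=\lambda z^{k}$, and then apply \eqref{spectrumadditive} to the pair $(\lambda z^n,\,z^n)$ to force $k=\psi(n)$. The only cosmetic differences are that in the final step the paper re-invokes Lemma~\ref{1} (since $\lambda z^{k}+z^{\psi(n)}\in A(\mathbb{T})_+$ with spectrum in $(\lambda+1)\mathbb{T}$) rather than exhibiting a point of modulus $|1-\lambda|$ by hand, and that your equality $\sigma(\lambda z^n)=\lambda\mathbb{T}$ should be weakened to $\subset\lambda\mathbb{T}$ to accommodate $n=0$.
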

\begin{proof}
Let $\lambda>0$ and $n \in \mathbb{Z}$.  
We have $T(\lambda z^n) \in A(\mathbb{T})_{+}$ and $\sigma(T(\lambda z^n))=\sigma(\lambda z^n) \subset \lambda \mathbb{T}$. Together with Lemma \ref{1}, this implies that there exists $m \in \mathbb{Z}$ such that $T(\lambda z^n)=\lambda z^{m}$. Hence $(\lambda+1) \mathbb{T} \supset \sigma(z^n+\lambda z^n)=\sigma(T(z^n)+T(\lambda z^n))=\sigma(z^{\psi(n)}+\lambda z^{m})$. Since $z^{\psi(n)}+\lambda z^{m} \in A(\mathbb{T})_{+}$, Lemma \ref{1} shows that $\psi(n)=m$. 
\end{proof}

\begin{lemma}\label{B1}
The map $\psi:\mathbb{Z} \to \mathbb{Z}$ is a bijection.
\end{lemma}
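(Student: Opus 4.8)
The goal is to show that the map $\psi$, which is well-defined by $T(z^n)=z^{\psi(n)}$, is both surjective and injective. Throughout I will use the fact recalled in the introduction that, since $A(\mathbb{T})$ is the Fourier algebra of $\mathbb{T}$, the spectrum $\sigma(h)$ of an element $h\in A(\mathbb{T})$ equals its range $h(\mathbb{T})$; in particular $\sigma(z^k)=\mathbb{T}$ when $k\neq 0$ and $\sigma(z^0)=\{1\}$. Comparing these with Lemma \ref{s0} applied to $f=z^n$ (which gives $\sigma(z^{\psi(n)})=\sigma(z^n)$) yields the preliminary remark that $\psi(n)=0$ if and only if $n=0$.

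For surjectivity, fix $m\in\mathbb{Z}$. Since $z^m\in A(\mathbb{T})_{+}$ and $T$ is surjective, there is $f\in A(\mathbb{T})_{+}$ with $T(f)=z^m$; then $\sigma(f)=\sigma(T(f))=\sigma(z^m)\subset\mathbb{T}$ by Lemma \ref{s0}, so Lemma \ref{1} with $\lambda=1$ gives $f=z^{k}$ for some $k\in\mathbb{Z}$, and hence $\psi(k)=m$. Thus $\psi$ is onto.

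For injectivity, suppose $\psi(n)=\psi(n')=:k$ and assume towards a contradiction that $n\neq n'$; by the preliminary remark we may assume $n,n'\neq 0$, so $k\neq 0$. Applying \eqref{spectrumadditive} to the pair $z^{n},z^{n'}\in A(\mathbb{T})_{+}$ gives
\[
\sigma(z^{n}+z^{n'})=\sigma\bigl(T(z^{n})+T(z^{n'})\bigr)=\sigma\bigl(z^{\psi(n)}+z^{\psi(n')}\bigr)=\sigma(2z^{k})=2\mathbb{T},
\]
so $0$ is not in the range of $z\mapsto z^{n}+z^{n'}$ on $\mathbb{T}$. But since $n-n'\neq 0$ the map $z\mapsto z^{\,n-n'}$ is onto $\mathbb{T}$, so $z_{0}^{\,n-n'}=-1$ for some $z_{0}\in\mathbb{T}$, and then $z_{0}^{n}+z_{0}^{n'}=z_{0}^{n'}(z_{0}^{\,n-n'}+1)=0$ --- a contradiction. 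Hence $n=n'$, completing the proof that $\psi$ is a bijection. (The same contradiction can be packaged via Lemma \ref{2}: from \eqref{spectrumadditive} and ``$\sigma=$ range'' one has $M_{n,n'}=M_{\psi(n),\psi(n')}$, but the left side is finite for $n\neq n'$ while $M_{k,k}=2\mathbb{T}$ as $k\neq 0$.) This lemma is elementary; the only delicate points are the use of ``spectrum $=$ range'' and the harmless zero-exponent edge case, and I expect no real obstacle here --- the step serves mainly as preparation for using Lemma \ref{2} to pin down the exact form of $\psi$.
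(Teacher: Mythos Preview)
Your proof is correct and follows essentially the same route as the paper: surjectivity via $T$ surjective plus Lemma~\ref{1}, and injectivity via $\sigma(z^{n}+z^{n'})=\sigma(2z^{k})$. The only cosmetic difference is that you dispose of the $k=0$ case up front and then exhibit $0$ in the range of $z^{n}+z^{n'}$ directly, whereas the paper splits on whether $\sigma(2z^{\psi(n)})$ is $2\mathbb{T}$ or $\{2\}$ and appeals to the finiteness of $M_{m,n}$ from Lemma~\ref{2}.
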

\begin{proof}
Let $n \in \mathbb{Z}$. Since $T$ is surjective, there exists $f \in A(\mathbb{T})_{+}$ such that $T(f)=z^{n}$. Thus $\sigma(f)=\sigma(T(f))=\sigma(z^n) \subset \mathbb{T}$. Lemma \ref{1} shows that there is $m \in \mathbb{Z}$ such that $f=z^{m}$. Therefore we have $\psi(m)=n$, which implies that $\psi$ is surjective. In order to show that  $\psi$ is injective, suppose that $\psi(m)=\psi(n)$. Then we get $\sigma(z^{m}+z^{n})=\sigma(T(z^m)+T(z^n))=\sigma(z^{\psi(m)}+z^{\psi(n)})=\sigma(2z^{\psi(n)})$. Thus $\sigma(z^{m}+z^{n})$ is either $2\mathbb{T}$ or $\{2\}$. If $\sigma(z^{m}+z^{n})=2\mathbb{T}$, we get $M_{m,n}=2\mathbb{T}$.  By Lemma \ref{2}, we obtain $m=n$.  If $\sigma(z^{m}+z^{n})=\{2\}$, $z^{m}+z^{n}=2$, which yields  $m=n=0$. Hence $\psi$ is injective.
\end{proof}

\begin{lemma}\label{p0}
We have $\psi(0)=0$. 
\end{lemma}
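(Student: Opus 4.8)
The plan is to pin down $\psi(0)$ by computing the spectrum of $T(1)$, where $1 = z^{0}$ denotes the constant function. Recall (as noted in the introduction for Fourier algebras) that for any $f \in A(\mathbb{T})$ the spectrum $\sigma(f)$ coincides with the range $f(\mathbb{T})$. In particular, for $n \in \mathbb{Z}$ the range of $z \mapsto z^{n}$ equals $\{1\}$ when $n=0$ and equals all of $\mathbb{T}$ when $n \neq 0$; hence $\sigma(z^{n}) = \{1\}$ if and only if $n = 0$. So it suffices to show that $\sigma(T(1)) = \{1\}$.

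First I would observe that $1 = z^{0} \in A(\mathbb{T})_{+}$ and, by the definition of $\psi$, that $T(1) = z^{\psi(0)}$. Then, applying Lemma~\ref{s0} with $f = 1$ (equivalently, putting $f = g = 1$ in \eqref{spectrumadditive} and invoking Lemma~\ref{3} with $\lambda = 1$), we obtain $\sigma(z^{\psi(0)}) = \sigma(T(1)) = \sigma(1) = \{1\}$. Comparing with the previous paragraph forces $\psi(0) = 0$.

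There is essentially no obstacle here: the statement is an immediate consequence of Lemma~\ref{s0} together with the identification of the spectrum with the range in $A(\mathbb{T})$. If one prefers not to quote Lemma~\ref{s0}, one can instead feed the pair $f = z^{n}$, $g = \lambda \cdot 1$ into \eqref{spectrumadditive} (for $n \neq 0$ and a scalar $\lambda > 0$ with $\lambda \neq 1$) and use Lemma~\ref{3}: the left-hand side $\sigma(z^{n} + \lambda)$ is the circle $\{\,w : |w - \lambda| = 1\,\}$, while taking $n = \psi^{-1}(0)$ — which exists and is nonzero by Lemma~\ref{B1} once $\psi(0) \neq 0$ is assumed — makes the right-hand side $\sigma(z^{\psi(n)} + \lambda z^{\psi(0)}) = \sigma(1 + \lambda z^{\psi(0)})$ the circle $\{\,w : |w - 1| = \lambda\,\}$, and these two circles are distinct when $\lambda \neq 1$, a contradiction. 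The direct argument above is shorter, and I would present that one.
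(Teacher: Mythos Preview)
Your argument is correct and is essentially the paper's own proof: the paper simply notes that $\sigma(T(1))=\sigma(1)=\{1\}$ (this is Lemma~\ref{s0}), so $T(1)=1$, i.e., $\psi(0)=0$. Your alternative route via Lemma~\ref{B1} is also valid but, as you say, unnecessary.
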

\begin{proof}
Note that $z^0=1$, which is a constant function. Since $\sigma(T(1))=\sigma(1)=\{1\}$, $T(1)=1$.  
\end{proof}

\begin{lemma}\label{psi} 
We have either $\psi(n)=n$ for all $n \in \mathbb{Z}$, or $\psi(n)=-n$ for all $n \in \mathbb{Z}$. 
\end{lemma}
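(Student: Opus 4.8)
The plan is to recover the map $\psi$ from the geometry of the planar curves $\sigma(\lambda z^{m}+\mu z^{n})$ near their points of maximal modulus. Recall that for $h\in A(\mathbb{T})$ the spectrum $\sigma(h)$ equals the range $h(\mathbb{T})$. Hence, combining Lemma \ref{3} with \eqref{spectrumadditive}, for all $\lambda,\mu>0$ and all $m,n\in\mathbb{Z}$ we have an equality of subsets of $\mathbb{C}$,
\[
\Gamma:=\{\lambda z^{m}+\mu z^{n}:z\in\mathbb{T}\}=\{\lambda z^{\psi(m)}+\mu z^{\psi(n)}:z\in\mathbb{T}\}.
\]
Every $w\in\Gamma$ satisfies $|w|\le\lambda+\mu$, with equality at $w=\lambda+\mu$ (take $z=1$). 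The idea is that the contact of $\Gamma$ with the circle $\{|w|=\lambda+\mu\}$ at this extremal point is measured by an explicit rational function of $\lambda,\mu,m,n$, and comparing this invariant across the two descriptions of $\Gamma$ will force $\psi$ to be linear.

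Fix distinct nonzero integers $m,n$ and restrict to the ratios $\lambda:\mu$ for which $\lambda m+\mu n\neq0$ and $\lambda\psi(m)+\mu\psi(n)\neq0$; this excludes only finitely many ratios. For such $\lambda,\mu$ the parametrisation $\theta\mapsto\lambda e^{im\theta}+\mu e^{in\theta}$ is an immersion at $\theta=0$, it is $\tfrac{2\pi}{\operatorname{gcd}(m,n)}$-periodic, and it attains the value $\lambda+\mu$ exactly for $\theta\in\tfrac{2\pi}{\operatorname{gcd}(m,n)}\mathbb{Z}$; consequently, in a neighbourhood of $\lambda+\mu$ the set $\Gamma$ is a single embedded $C^{\infty}$ arc, on which $\operatorname{Im}w$ is a local coordinate. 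Along this parametrisation $(\lambda+\mu)^{2}-|w|^{2}=2\lambda\mu\bigl(1-\cos((m-n)\theta)\bigr)$ and $\operatorname{Im}w=\lambda\sin m\theta+\mu\sin n\theta$, so a second-order expansion at $\theta=0$ yields
\[
\lim_{\substack{w\to\lambda+\mu\\ w\in\Gamma}}\frac{(\lambda+\mu)^{2}-|w|^{2}}{(\operatorname{Im}w)^{2}}=\frac{\lambda\mu\,(m-n)^{2}}{(\lambda m+\mu n)^{2}}.
\]
The left-hand side depends only on the set $\Gamma$ and the point $\lambda+\mu$; running the same computation through the parametrisation $\theta\mapsto\lambda e^{i\psi(m)\theta}+\mu e^{i\psi(n)\theta}$ therefore shows that this limit also equals $\dfrac{\lambda\mu\,(\psi(m)-\psi(n))^{2}}{(\lambda\psi(m)+\mu\psi(n))^{2}}$.

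Equating the two expressions gives $(m-n)^{2}(\lambda\psi(m)+\mu\psi(n))^{2}=(\psi(m)-\psi(n))^{2}(\lambda m+\mu n)^{2}$ for all but finitely many ratios $\lambda:\mu$, hence as a polynomial identity in $\lambda,\mu$. Since $\mathbb{C}[\lambda,\mu]$ is a domain, either $(m-n)(\lambda\psi(m)+\mu\psi(n))=(\psi(m)-\psi(n))(\lambda m+\mu n)$ identically, or $(m-n)(\lambda\psi(m)+\mu\psi(n))=-(\psi(m)-\psi(n))(\lambda m+\mu n)$ identically. In the second case, comparing the coefficients of $\lambda$ and of $\mu$ and using $m\neq n$ forces $\psi(m)=\psi(n)$, contradicting the injectivity of $\psi$ (Lemma \ref{B1}); so the first case holds, and comparing the coefficients of $\lambda$ gives $m\psi(n)=n\psi(m)$. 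Applying this to the pairs $(1,n)$ for all $n\neq0,1$, and using $\psi(0)=0$ (Lemma \ref{p0}), we obtain $\psi(n)=\psi(1)\,n$ for every $n\in\mathbb{Z}$; since $\psi$ is a bijection of $\mathbb{Z}$, necessarily $\psi(1)\in\{1,-1\}$, which is the assertion of the lemma.

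The step I expect to demand the most care is the passage from the equality of ranges to the displayed limit: one must verify that near its maximal-modulus point $\lambda+\mu$ the \emph{set} $\Gamma$ is genuinely an embedded arc, so that the limit is an invariant of $\Gamma$ rather than of a chosen parametrisation, and one must check that the finitely many degenerate ratios $\lambda:\mu$ (those for which one of the two parametrisations has a stationary point at $\theta=0$) are harmlessly absorbed into the polynomial-identity argument. Everything after the displayed identity is elementary algebra.
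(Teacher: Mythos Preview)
Your argument is correct and genuinely different from the paper's. The paper works only with the single equality $\sigma(z+z^{m})=\sigma(z^{\psi(1)}+z^{\psi(m)})$: it first invokes Lemma~\ref{2} (the cardinality of the maximal--modulus set $M_{1,m}$) to obtain $\psi(m)-\psi(1)=g(m-1)$ for some integer $g$, and then runs a somewhat delicate trigonometric argument with an irrational angle $r\pi$ to upgrade this to $\psi(m)=\psi(1)m$ or $\psi(1)=\psi(m)m$, finishing with bijectivity. Your route instead exploits the full two--parameter family $\sigma(\lambda z^{m}+\mu z^{n})$ supplied by Lemma~\ref{3}: the second--order contact of $\Gamma$ with the circle of radius $\lambda+\mu$ at the extremal point is a set invariant (your verification that $\Gamma$ is locally a single embedded arc with $\operatorname{Im}w$ as coordinate is exactly what is needed here, and it goes through as you outline), and equating the two expressions for it produces a clean polynomial identity in $\lambda,\mu$ that yields $m\psi(n)=n\psi(m)$ directly. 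The trade--off is that the paper uses less input (only $\lambda=\mu=1$) but pays with a more intricate extraction; your approach spends the extra freedom in $\lambda,\mu$ to make the algebra almost trivial, and as a bonus does not rely on Lemma~\ref{2} at this step. The exclusion of the at most two degenerate ratios is indeed harmless, since the remaining $(\lambda,\mu)$ form an open subset of the positive quadrant on which two real polynomials agree.
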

\begin{proof}
Let $m \in \mathbb{Z}\setminus \{1\}$. By \eqref{spectrumadditive}, we have 
\begin{equation}\label{spe1}
\sigma(z+z^{m})=\sigma(z^{\psi(1)}+z^{\psi(m)}).
\end{equation}
Hence we get $M_{1,m}=M_{\psi(1),\psi(m)}$. Lemma \ref{2} shows that $\frac{|m-1|}{\operatorname{gcd}(m,1)}=\frac{|\psi(m)-\psi(1)|}{\operatorname{gcd}(\psi(m),\psi(1))}$. Since $\psi(m) \neq \psi(1)$ by Lemma \ref{B1} and $\operatorname{gcd}(m,1)=1$, there exists $g \in \mathbb{Z}$ such that 
\begin{equation}\label{gcd1}
\psi(m)-\psi(1)=g(m-1).
\end{equation}
Note that $e^{in_1\theta}+e^{in_2\theta}=e^{i\frac{n_1+n_2}{2}\theta}(e^{i\frac{n_1-n_2}{2}\theta}+e^{i\frac{-n_1+n_2}{2}\theta})=2\cos(\frac{n_1-n_2}{2}\theta) e^{i\frac{n_1+n_2}{2}\theta}$. Let $r \in \mathbb{R}$ be an irrational number.  \eqref{spe1} implies that there is $\theta_0 \in [0, 2\pi]$ such that 
\[
2 \cos \left(\frac{m-1}{2} r\pi \right) e^{i\frac{m+1}{2}r \pi}=2 \cos \left(\frac{\psi(m)-\psi(1)}{2} \theta_0 \right) e^{i\frac{\psi(m)+\psi(1)}{2}\theta_0}.
\]
Thus we have
\begin{equation}\label{absolutevalue}
\left|\cos \frac{m-1}{2} r\pi \right|=\left| \cos \frac{\psi(m)-\psi(1)}{2} \theta_0 \right|
\end{equation}
and 
\begin{equation}\label{argument}
e^{i(\frac{\psi(m)+\psi(1)}{2}\theta_0-\frac{m+1}{2}r \pi)} \in \mathbb{R}.
\end{equation}
According to \eqref{absolutevalue},  we have two following cases:\\
{\it{First case} }: $\cos \frac{m-1}{2} r\pi = \cos \frac{\psi(m)-\psi(1)}{2} \theta_0$. Then for some $n \in \mathbb{Z}$,
$\frac{m-1}{2} r\pi =\pm \frac{\psi(m)-\psi(1)}{2} \theta_0+2n\pi$.
By \eqref{gcd1}, we have $\frac{m-1}{2} r\pi =\pm \frac{g(m-1)}{2} \theta_0+2n\pi$, so that $(m-1)(r \pi \mp g\theta_0)=4n\pi$. Since $\mathbb{Z} \ni m \neq 1$, it follows that there is $q \in \mathbb{Q}$ such that 
$g \theta_0=r\pi + q\pi$ or $g \theta_0=-r\pi + q\pi$. \\
{\it{Second case:}} $-\cos \frac{m-1}{2} r\pi = \cos \frac{\psi(m)-\psi(1)}{2} \theta_0$, which is equivalent to $\cos ( \pi - \frac{m-1}{2} r\pi)= \cos \frac{\psi(m)-\psi(1)}{2} \theta_0$. Then there is $n \in \mathbb{Z}$ such that 
$\frac{\psi(m)-\psi(1)}{2} \theta_0=\pm (\pi - \frac{m-1}{2} r\pi)+2n \pi$.
By a similar argument as in the first case, we also obtain $q \in \mathbb{Q}$ such that 
$g \theta_0=r\pi + q\pi$ or $g \theta_0=-r\pi + q\pi$. \\ 
Ultimately, we conclude that \eqref{absolutevalue} implies that there exists $q \in \mathbb{Q}$ such that 
\begin{equation}\label{gt}
g \theta_0=r\pi + q\pi \quad \text{or}  \quad g \theta_0=-r\pi + q\pi.
\end{equation}
 By \eqref{argument}, there is $n \in \mathbb{Z}$ such that 
\[
\frac{\psi(m)+\psi(1)}{2}\theta_0-\frac{m+1}{2}r \pi=n \pi.
\]
Multiplying both sides by $g$ and substituting \eqref{gt}, we obtain 
\[
((m+1)g \mp (\psi(m)+\psi(1)))r =(\psi(m)+\psi(1))q -2ng.
\]
Note that $(m+1)g \mp (\psi(m)+\psi(1))$ is an integer, $r$ is irrational, and the right-hand side is a rational number. Thus we obtain $\psi(m)+\psi(1)=\pm (m+1)g$. When $\psi(m)+\psi(1)=(m+1)g$ (resp.  $\psi(m)+\psi(1)=-(m+1)g$), it follows from \eqref{gcd1} that
$\psi(m)=\psi(1)m$ (resp. $\psi(1)=\psi(m)m$).  Thus it follows that for any $m \in \mathbb{Z} \setminus \{1\} $, 
\[
\psi(m)=\psi(1)m  \quad \text{or} \quad \psi(1)=\psi(m)m
\]
holds. Let $m \in \mathbb{Z}$ with $|m|>|\psi(1)|$. Suppose $\psi(1)=\psi(m)m$, then $\psi(m)=\frac{\psi(1)}{m}$. This contradicts that  $\psi(m) \in \mathbb{Z}$. Thus if  $|m|>|\psi(1)|$, $\psi(m)=\psi(1)m$ holds.  Combining this with Lemma \ref{B1},  we obtain $\psi(1)=1$ or $\psi(1)=-1$. 
Therefore if  $|m|>1$, then 
\begin{equation}\label{eq}
\psi(m)=\psi(1)m.
\end{equation}
By Lemma \ref{p0}, \eqref{eq} holds for $m=0,1$.  Finally, since $\psi$ is a bijection, \eqref{eq}  also holds for $m=-1$. Hence we obtain the desired result.
\end{proof}

\begin{lemma}\label{d1}
Let $f, g \in A(\mathbb{T})_{+}$ satisfy $\sigma(f+ \lambda z^{n})=\sigma(g+\lambda z^{n})$ for any $\lambda >0$ and $n=1,-1$. Then $f=g$.
\end{lemma}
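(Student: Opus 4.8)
The plan rests on the fact, recorded in the introduction, that the Gelfand spectrum of $A(\mathbb{T})$ is $\mathbb{T}$, so that for every $\phi\in A(\mathbb{T})$ the spectrum $\sigma(\phi)$ is exactly the range $\phi(\mathbb{T})$. Hence the hypothesis says precisely that the curves $z\mapsto f(z)+\lambda z^{n}$ and $z\mapsto g(z)+\lambda z^{n}$ have the same image in $\mathbb{C}$ for every $\lambda>0$ and $n=\pm1$. Set $h=f-g\in C(\mathbb{T})$. I would first reduce the lemma to the two pointwise identities
\[
\operatorname{Re}(\bar z\, h(z))=0=\operatorname{Re}(z\, h(z)),\qquad z\in\mathbb{T},
\]
and then conclude as follows: writing $z=e^{i\theta}$ and $h(e^{i\theta})=a(\theta)+ib(\theta)$ with $a,b$ real-valued and continuous, adding the two identities gives $a(\theta)\cos\theta=0$ and subtracting them gives $b(\theta)\sin\theta=0$ for all $\theta$; thus $a$ vanishes except possibly at $\theta=\pi/2,3\pi/2$ and $b$ vanishes except possibly at $\theta=0,\pi$, so by continuity $a\equiv b\equiv 0$, i.e.\ $f=g$.

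To produce the first identity, fix $z_0\in\mathbb{T}$. For each $\lambda>0$, since $f(z_0)+\lambda z_0\in\sigma(f+\lambda z)=\sigma(g+\lambda z)$, there is $z_1=z_1(\lambda)\in\mathbb{T}$ with $g(z_1)+\lambda z_1=f(z_0)+\lambda z_0$, i.e.\ $g(z_1)-f(z_0)=\lambda(z_0-z_1)$. As $f,g$ are bounded on $\mathbb{T}$, this forces $|z_0-z_1|=|g(z_1)-f(z_0)|/\lambda\to 0$, so $z_1\to z_0$, and by continuity of $g$, $\lambda(z_0-z_1)=g(z_1)-f(z_0)\to g(z_0)-f(z_0)=-h(z_0)$. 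Choosing the argument $\theta_1$ of $z_1$ near that of $z_0$, the exact identity $z_0-z_1=-2i\sin(\tfrac{\theta_1-\theta_0}{2})\,e^{i(\theta_0+\theta_1)/2}$ gives $\lambda(z_0-z_1)=-2i\bigl(\lambda\sin\tfrac{\theta_1-\theta_0}{2}\bigr)e^{i(\theta_0+\theta_1)/2}$ with $e^{i(\theta_0+\theta_1)/2}\to z_0\neq 0$. Dividing, the \emph{real} quantity $\lambda\sin\tfrac{\theta_1-\theta_0}{2}$ converges, necessarily to a real number $t$, and $h(z_0)=2itz_0$; in particular $\operatorname{Re}(\bar z_0 h(z_0))=\operatorname{Re}(2it)=0$. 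Running the identical argument with $n=-1$, starting from $f(z_0)+\lambda z_0^{-1}\in\sigma(g+\lambda z^{-1})$ and using $\bar z_0-\bar z_1=\overline{z_0-z_1}$, yields $h(z_0)\in i\bar z_0\,\mathbb{R}$ and hence $\operatorname{Re}(z_0 h(z_0))=0$. This gives both identities for every $z_0\in\mathbb{T}$, and the proof is complete.

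The only delicate point is the limiting step: one must check that $z_1(\lambda)\to z_0$ and then that the rescaled increment $\lambda(z_0-z_1)$ not merely stays bounded but converges, with its limit constrained to lie on the real line $iz_0\,\mathbb{R}$ — geometrically, the rotated tangent line to $\mathbb{T}$ at $z_0$. This constraint is exactly why a single value of $n$ gives only one real linear condition on $h(z_0)$, so that both $n=1$ and $n=-1$ are genuinely needed to pin $h(z_0)$ down. Everything else — the trigonometric identity for $z_0-z_1$ and the continuity argument that eliminates $a$ and $b$ — is routine.
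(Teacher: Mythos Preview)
Your proof is correct and follows essentially the same route as the paper: fix $z_0\in\mathbb{T}$, use the spectral hypothesis at large $\lambda$ to find $z_1(\lambda)\to z_0$ with $\lambda(z_0-z_1)\to -h(z_0)$, extract the tangential constraint $\operatorname{Re}(\bar z_0 h(z_0))=0$ (and $\operatorname{Re}(z_0 h(z_0))=0$ from $n=-1$), then intersect the two lines and use continuity. The only cosmetic difference is how the tangential constraint is obtained --- the paper expands $n^2=|c_n+ne^{it}|^2$ to get $\operatorname{Re}(c_ne^{-it})=-|c_n|^2/(2n)\to 0$, whereas you use the chord identity $z_0-z_1=-2i\sin\bigl(\tfrac{\theta_1-\theta_0}{2}\bigr)e^{i(\theta_0+\theta_1)/2}$ to see directly that the limit lies in $iz_0\mathbb{R}$ --- but these are equivalent algebraic manipulations of the same geometric fact.
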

\begin{proof}
Fix $t \in [0, 2\pi)$ with $t \neq 0, \frac{\pi}{2}, \pi, \frac{3}{2}\pi$. Set $I:=f(e^{it})-g(e^{it})$. Since for any positive integer $n$ we have $\sigma(f+ n z)=\sigma(g+n z)$,  there exists $a_n \in [0, 2\pi)$ such that 
\begin{equation}\label{z1}
f(e^{it})+n e^{it}=g(e^{i a_n})+ne^{i a_n}. 
\end{equation}
Since $|e^{i a_n}- e^{it}| \le \frac{1}{n} (\|f\|_{\infty}+\|g\|_{\infty})$, we get $e^{i a_n} \to e^{it}$ as $n \to \infty$.  Set $c_n:=f(e^{it})-g(e^{i a_n})$. The continuity of $g$ implies that $c_n \to I$ as $n \to \infty$. By \eqref{z1}, which is $c_n+n e^{it}=ne^{i a_n}$, we have $n^2=(c_n+n e^{it})(\overline{c_n}+n e^{-it})$. Thus $\operatorname{Re}(c_n e^{-it})=-\frac{1}{2n}|c_n|^2$. As $n \to \infty$, this yields that
\begin{equation}\label{line1}
\operatorname{Re}(I e^{-it})=0. 
\end{equation}
Since for any positive integer $n$ we have $\sigma(f+ n z^{-1})=\sigma(g+n z^{-1})$,  there exists $b_n \in [0, 2\pi)$ such that 
\begin{equation*}\label{z2}
f(e^{it})+n e^{-it}=g(e^{i b_n})+ne^{-i b_n}. 
\end{equation*}
Set  $d_n:=f(e^{it})-g(e^{i b_n})$. Using a similar argument as above, we get $d_n \to I$ as $n \to \infty$ and $\operatorname{Re}(d_n e^{it})=-\frac{1}{2n}|d_n|^2$. Thus we obtain
\begin{equation}\label{line2}
\operatorname{Re}(I e^{it})=0. 
\end{equation}
\eqref{line1} and \eqref{line2} imply that  $I$ is the point of intersection of the two lines.  Hence $I=0$, which means $f(e^{it})=g(e^{it})$. As $f$ and $g$ are continuous, we conclude that $f=g$.
\end{proof}

\begin{proof}[Proof of Theorem \ref{main}]
Let $f \in A(\mathbb{T})_{+}$. Lemma \ref{psi} shows we have two cases. Firstly assume that $\psi(n)=n$ for any $n \in \mathbb{Z}$.  By Lemma \ref{3} we have $\sigma(T(f)+\lambda z^{n})=\sigma(T(f)+ T(\lambda z^{n}))=\sigma(f+\lambda z^{n})$ for any $\lambda>0$ and $n=1,-1$. Applying Lemma \ref{d1}, we conclude that $T(f)=f$. Secondly, assume that $\psi(n)=-n$ for any $n \in \mathbb{Z}$. We define a map $\widetilde{T}: A(\mathbb{T})_{+} \to A(\mathbb{T})_{+}$ by $\widetilde{T}(f)(e^{it})=T(f)(e^{-it})$ for any $e^{it} \in \mathbb{T}$. Then it is easy to check that $\widetilde{T}$ is a surjective map which satisfies \eqref{spectrumadditive}. Moreover we get $\widetilde{T}(z^n)=z^{n}$ for any $n \in \mathbb{Z}$. Using the same argument above, we conclude that $\widetilde{T}(f)=f$. This implies that $T(f)(e^{it})=f(e^{-it})=\overline{f}(e^{it})$ for any $e^{it} \in \mathbb{T}$. 
\end{proof}


\begin{thebibliography}{99}

\bibitem{ABS} M.~Askes, R.~M.~Brits and F.~Schulz, Spectrally additive group homomorphisms on Banach algebras, J. Math. Anal. Appl. {\bf 508} (2022), no.~2, Paper No. 125910, 16 pp.

\bibitem{BS} R.~Benjamin and F.~Schulz, Spectrally additive maps on Banach algebras, Acta Math. Hungar. {\bf 170} (2023), no.~1, 194--208.

\bibitem{C}C.~Costara, Maps on matrices that preserve the spectrum, Linear Algebra Appl. {\bf 435} (2011), no.~11.


\bibitem{E} P.~Eymard, L'alg\`ebre de Fourier d'un groupe localement compact, Bull. Soc. Math. France {\bf 92} (1964), 181--236.

\bibitem{HR}E.~Hewitt and K.~A.~Ross, Abstract harmonic analysis. Vol. II: Structure and analysis for compact groups. Analysis on locally compact Abelian groups. 2nd printing. Berlin: Springer-Verlag (1994).

\bibitem{HS}H.~Havlicek and P.~\v Semrl, From geometry to invertibility preservers, Studia Math. {\bf 174} (2006), no.~1, 99--109.


\bibitem{KS}S.~Kowalski and Z.~S\l odkowski, A characterization of multiplicative linear functionals in Banach algebras, Studia Math. {\bf 67} (1980), no.~3, 215--223.

\bibitem{LO} Y-F.~Lin and S.~Oi, Spectrum additive maps of Fourier algebras, Proc. Amer. Math. Soc. {\bf 153} (2025), no.~8, 3431--3438.

\bibitem{MS} M. ~Mathieu and F. ~Schulz, Additive spectrum preserving mappings from von Neumann algebras, Indag. Math. (N.S.) {\bf 36} (2025), no.~4, 1112--1123.


\end{thebibliography}
\end{document}